\newtheorem{lemma}{Lemma}
\newtheorem{theorem}{Theorem}
\newcommand{\R}{\mathbb{R}}
\newcommand{\Z}{\mathbb{Z}}
\newcommand{\E}{\mathbb{E}}
\renewcommand{\P}{\mathbb{P}}
\newcommand{\old}[1]{}
\newcommand{\G}{\mathcal{G}}
\newcommand{\be}{\begin{equation}}
\newcommand{\ee}{\end{equation}}
\renewcommand{\H}{\mathcal{H}}
\newcommand{\HH}{\mathbb{H}}
\begin{document}
\title{Random two-component spanning forests}

\author{Adrien Kassel}
\address{A. K.: D\'epartement de Math\'ematiques et Applications, \'Ecole Normale Sup\'erieure, 45 rue d'Ulm, 75230 Paris Cedex 05, France.}

\author{Richard Kenyon}
\address{R. K.: Mathematics Department, Brown University, 151 Thayer St., Providence, RI 02912.}

\author{Wei Wu}\address{W. W.: Applied Mathematics Department, Brown University, 182 George St., Providence, RI 02912.}

\thanks{A. K. was partially supported by Fondation Sciences Math\'ematiques de Paris. The research of R. K. 
was supported by the NSF}

\date{}

\begin{abstract} We study random two-component spanning forests ($2$SF) of finite graphs,
giving formulas for the first and second moments of the sizes of the components,
vertex-inclusion probabilities for one or two vertices, and the probability
that an edge separates the components. We compute the limit of these quantities when the graph tends to an infinite periodic graph in $\R^d$.
\end{abstract}

\maketitle

\section{Introduction}

For $\G$ a finite connected graph with vertex set $V$, a \emph{spanning tree} is a subgraph $(V,A)$, where $A$ is a set of edges, which contains no cycles and is connected. A \emph{two-component spanning forest} ($2$SF) is a subgraph $(V,B)$, where $B$ is a set of edges, which contains no cycles
and has exactly two connected components. A spanning tree of an $n$-vertex graph has $n-1$ edges;
a $2$SF has $n-2$ edges.

The matrix-tree theorem \cite{Kirch} (Theorem \ref{MTthm} below) equates the 
number of spanning trees $\kappa=\kappa(\G)$ with the determinant of the reduced Laplacian.
This result has led to an extensive study of the random spanning tree on many different families of graphs,
see e.g. \cite{BP, Wil, BLPS}.

Let $\kappa_2=\kappa_2(\G)$ be the number of $2$SFs. The ratio $\kappa_2(\G)/\kappa(\G)$ has an explicit
expression in terms of the potential kernel which follows from~\cite{Myr} as explained in~\cite{KW}. It reads
\begin{equation}  \label{ratio}
\frac{\kappa_2(\G)}{\kappa(\G)}=\sum_{uv\in
E}A_{u,v}A_{v,u}+\left(A_{u,v}-A_{v,u}\right)^2\,,
\end{equation}
where $A_{u,v}=G^{r}_{u,u}-G^{r}_{u,v}$ is the potential kernel and $G^r$ is the Green's function with Dirichlet boundary conditions at some vertex $r$. 

This implies the following theorem (which was previously obtained by other means in~\cite{KenWil,LP}).
\begin{theorem}\cite{KenWil, LP, KW}
For the $n\times n$ grid $\G$ we have
$$\kappa_2(\G) = \kappa(\G)\frac{n^2}{8}(1+o(1))\,,\,\text{as}\,\,n\to\infty\,.$$
\end{theorem}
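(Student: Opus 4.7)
The plan is to apply~(\ref{ratio}) and estimate each of its two sums in the language of effective resistance. Since the grid is undirected, $G^r$ is symmetric, and a short computation gives the identities
\[
A_{u,v}-A_{v,u} = G^r_{u,u}-G^r_{v,v},\qquad A_{u,v}+A_{v,u} = G^r_{u,u}+G^r_{v,v}-2G^r_{u,v} = R(u,v),
\]
where $R(u,v)$ denotes the effective resistance between $u$ and $v$ in $\G$. Combining them gives
\[
A_{u,v}A_{v,u} = \tfrac{1}{4}\bigl(R(u,v)^2-(G^r_{u,u}-G^r_{v,v})^2\bigr),
\]
so~(\ref{ratio}) can be rewritten as
\[
\frac{\kappa_2(\G)}{\kappa(\G)} = \frac{1}{4}\sum_{uv\in E} R(u,v)^2 + \frac{3}{4}\sum_{uv\in E}\bigl(G^r_{u,u}-G^r_{v,v}\bigr)^2.
\]

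The main contribution comes from the first sum. The key input is the classical fact that the effective resistance between any pair of adjacent vertices in $\Z^2$ equals $\tfrac{1}{2}$; equivalently, the infinite-lattice potential kernel satisfies $a_{\Z^2}(e)=\tfrac{1}{4}$ for each unit vector $e$, as one sees immediately by Fourier inversion from the identity $a_{\Z^2}(e_1)+a_{\Z^2}(e_2)=\tfrac{1}{2}$. Comparing $G^r$ to the infinite-lattice potential kernel by the method of images, one obtains $R(u,v)=\tfrac{1}{2}+o(1)$ uniformly as $\min(d(u,r),d(u,\partial\G))\to\infty$. Since the $n\times n$ grid has $2n(n-1)\sim 2n^2$ edges, and any $\varepsilon n$-layer around $\{r\}\cup\partial\G$ contains only $O(\varepsilon n^2)$ edges, each with $R(u,v)\le 1$ by Rayleigh monotonicity, we conclude
\[
\tfrac{1}{4}\sum_{uv\in E} R(u,v)^2 = \tfrac{1}{4}\cdot 2n^2\cdot\tfrac{1}{4}+o(n^2) = \tfrac{n^2}{8}+o(n^2).
\]

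For the remaining sum, the same Green's function asymptotics give $G^r_{u,u} = \tfrac{2}{\pi}\log d(u)+O(1)$ with $d(u)=\min(d(u,r),d(u,\partial\G))$, so the discrete gradient along an edge $uv$ is $O(1/d(u))$. Decomposing vertices into $O(n)$ shells at distance $k\le n/2$ from $\{r\}\cup\partial\G$ and summing $1/k^2$ yields $\sum_{uv\in E}(G^r_{u,u}-G^r_{v,v})^2 = O(n)$, which is negligible. Combined with the previous estimate via the rewritten form of~(\ref{ratio}), this gives the claim. The main obstacle is the quantitative uniform comparison between the finite-grid Green's function and its $\Z^2$ counterpart in the bulk; this is the classical potential-theoretic input (cf.\ the local central limit theorem and Green's function estimates in Lawler's book) that underpins both the approach through~(\ref{ratio}) and the alternative routes taken in the cited references.
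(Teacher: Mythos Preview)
Your argument is correct and proceeds along the same lines as the paper, which (in Section~3) simply notes that $A_{u,v}=\tfrac{1}{2d}+o(1)=\tfrac14+o(1)$ for bulk edges and plugs this directly into~(\ref{ratio}) to obtain $2n^2\cdot\tfrac{1}{16}+o(n^2)=\tfrac{n^2}{8}+o(n^2)$. Your algebraic rewriting in terms of $R(u,v)$ and $G^r_{u,u}-G^r_{v,v}$ is valid and has the small virtue of making the main term independent of the choice of $r$, but it obliges you to control the asymmetry sum separately; the paper avoids this by using $A_{u,v}\to\tfrac14$, which handles the symmetric and antisymmetric parts of~(\ref{ratio}) simultaneously. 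Your $O(n)$ bound on that second sum is sharper than needed and its justification a bit loose (for the free $n\times n$ grid with a single Dirichlet vertex $r$, the appearance of $d(u,\partial\G)$ in your $d(u)$ is not clearly motivated), but the required $o(n^2)$ follows immediately from the same potential-kernel convergence, so the proof goes through.
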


In this paper, we compute various properties of the random $2$SF on the grid and other graphs. In particular we
give exact expressions for the first two moments of the volume
of the components, as well as vertex-inclusion probabilities. 

A first application concerns the Abelian sandpile model. By \cite{IKP}, waves of topplings of avalanches of sandpiles started at a vertex~$v$ are in bijection with $2$SFs where $v$ is disconnected from the sink. Theorem~\ref{main} below hence yields the first moment of the volume covered by each such wave (see Section~\ref{pinbush}), where the sink is the boundary vertex.

A second application, in the planar case, concerns cycle-rooted spanning trees. 
On a planar graph the dual of a $2$SF is a \emph{cycle-rooted spanning tree (CRST)}, that is, a set of $n=|V|$ edges connecting all vertices (and thus containing a unique cycle).  
By \cite{LP}, the expected length of the cycle in the scaling limit is related to the
so-called ``looping constant" of loop-erased random walk (LERW), the density of sand in recurrent Abelian sandpiles, and
derivatives of the Tutte polynomial at $( 1,1) $ (see also \cite{KenWil, KW}).  
Our results can be interpreted as computing the first two moments of the area of the unique cycle,
as well as the probabilities that the cycle encloses a given face or pair of faces, and the probability that
an edge is in the unique cycle.
 
\section{Finite graphs}
\label{2SF}

\subsection{Spanning trees and potential theory}
Let $\G=(V,E)$ be an undirected finite connected graph, 
endowed with a function $c:E\rightarrow \R_{>0}$ (which we call \emph{conductance} or \emph{weight}), 
and $b$ a marked vertex. 
The \emph{Laplacian operator} $\Delta:\R^V\to\R^V$ is defined by 
$$\Delta f(v) = \sum_{v'\sim v}c_{vv'}(f(v)-f(v'))$$
where the sum is over the neighbors of $v$.
The \emph{Dirichlet Laplacian} $\Delta_D$ is defined on~$\R^{V\setminus\{b\}}$
by the same formula (in which the sum, however, ranges over all of $V$, not just $V\setminus\{b\}$); 
in the natural basis indexed by $V$, $\Delta_D$ is 
the submatrix of $\Delta$ obtained by removing $b$'s row and column.

The operator $\Delta_D$ is invertible, see Theorem \ref{MTthm} below. 
Let $G$ denote the \emph{Green's function
with Dirichlet boundary conditions at $b$}; it is the inverse of $\Delta_D$. 

Entries of $G$ have both probabilistic and potential-theoretic interpretations:
$G_{u,v}$ is the expected number of visits to $v$ of a conductance-biased
random walk from $u$ to $b$. It is also the voltage at $v$ when one unit of current
flows from $u$ to $b$. $G_{u,u}$ in particular is the resistance between $u$ and $b$ \cite{DS}. See also
(\ref{Guuinterp}) below.

Given two directed edges $e=u_1v_1,e'=u_2v_2$ we define the \emph{transfer current} to be
$$T(e,e') = c(e')\left(G(u_1,u_2)-G(u_1,v_2)-G(u_2,v_1)+G(v_1,v_2)\right)\,.$$

The transfer current is used to compute edge inclusion probabilities for random spanning trees \cite{BP},
for example 
\be\label{treeedgeprob}\Pr(\text{edge $e$ is in the tree}) = T(e,e).\ee
The quantity $T(e,e')$ is also the amount of current crossing edge $e'$ when one unit of current
flows in at $u_1$ and out at $v_1$.


\old{
For a subgraph $\H$ of $\G$, the \emph{weight} of $\H$  
is defined by $w(\H)=\prod_{e\in\H}c(e)$. The \emph{weighted sum} of any collection $\HH$ of subgraphs 
of $\G$ is defined by $w({\HH})=\sum_{\H\in {\HH}}w(\H)$.
}
 
We define 
$$\kappa=\kappa (\G)= \sum_{\text{trees}~T}\,\prod_{e\in T}c(e)$$ 
to be the weighted sum of the collection of spanning
trees.

\begin{theorem}[The matrix-tree theorem \cite{Kirch}]\label{MTthm}
$\kappa(\G)=\det\Delta_D$.
\end{theorem}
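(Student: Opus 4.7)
The plan is to prove the matrix-tree theorem by the classical Cauchy--Binet approach, factoring $\Delta_D$ through the signed incidence matrix and then identifying the nonzero terms in the Cauchy--Binet expansion with the spanning trees of $\G$.

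First, I would orient each edge $e\in E$ arbitrarily and introduce the weighted signed incidence matrix $N$ with rows indexed by $V\setminus\{b\}$ and columns indexed by $E$, whose $(v,e)$ entry is $+\sqrt{c(e)}$ if $v$ is the tail of $e$, $-\sqrt{c(e)}$ if $v$ is the head of $e$, and $0$ otherwise. A direct computation from the definition of $\Delta$ gives $\Delta_D = NN^T$: the diagonal entry at $v$ equals $\sum_{e\ni v} c(e)$, and the off-diagonal entry at $(v,v')$ equals $-\sum_{e=vv'} c(e)$, matching the Dirichlet Laplacian.

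Next I would apply the Cauchy--Binet formula to obtain
\[
\det\Delta_D \;=\; \det(NN^T) \;=\; \sum_{\substack{S\subset E \\ |S|=n-1}} \bigl(\det N_S\bigr)^2,
\]
where $N_S$ is the square submatrix of $N$ whose columns are indexed by $S$. The task then reduces to the combinatorial claim that
\[
(\det N_S)^2 \;=\; \begin{cases} \prod_{e\in S} c(e) & \text{if $(V,S)$ is a spanning tree,}\\ 0 & \text{otherwise.} \end{cases}
\]
If $S$ contains a cycle $C$, then orienting $C$ consistently yields a nontrivial linear combination (with coefficients $\pm\sqrt{c(e)}$) of the columns of $N_S$ that vanishes, so $\det N_S=0$. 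If $S$ is a spanning subgraph with $n-1$ edges and no cycle, then $(V,S)$ is a spanning tree, and I would finish by induction on $n$: a spanning tree on at least two vertices distinct from $b$ has a leaf $v\neq b$; the row of $N_S$ corresponding to $v$ has a single nonzero entry $\pm\sqrt{c(e_v)}$ (where $e_v$ is the unique edge at $v$ in $S$), and cofactor expansion along this row reduces the determinant to $\pm\sqrt{c(e_v)}$ times the analogous determinant for the tree $S\setminus\{e_v\}$ in the graph $\G$ with vertex $v$ deleted. Squaring and iterating yields $\prod_{e\in S} c(e)$.

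The main obstacle is the inductive leaf-stripping step: one must be careful that after deleting a leaf $v\neq b$, what remains is still a spanning tree of the reduced graph on $V\setminus\{v\}$ with marked vertex $b$, so the induction hypothesis applies verbatim. Once this is in place, summing over $S$ gives $\det\Delta_D = \sum_{\text{trees }T}\prod_{e\in T}c(e) = \kappa(\G)$, as desired. A side remark I would include: the argument also shows that $\Delta_D$ is positive definite (since $\kappa(\G)>0$ for any finite connected $\G$), justifying its invertibility stated earlier.
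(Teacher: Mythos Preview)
Your proof is correct and follows the standard Cauchy--Binet route. There is nothing to compare against here: the paper does not prove Theorem~\ref{MTthm} at all---it simply states the result and cites Kirchhoff~\cite{Kirch} as a classical fact used as background.

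One small slip worth fixing: in the cycle case, the linear dependence among the columns of $N_S$ has coefficients $\pm 1/\sqrt{c(e)}$, not $\pm\sqrt{c(e)}$. Indeed, if $M$ denotes the unweighted signed incidence matrix then the $e$-th column of $N$ is $\sqrt{c(e)}\,M_e$, and the cycle relation $\sum_{e\in C}\epsilon_e M_e=0$ becomes $\sum_{e\in C}(\epsilon_e/\sqrt{c(e)})\,N_e=0$. This does not affect your conclusion that $\det N_S=0$ when $S$ contains a cycle.
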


\old{
A \emph{CRST} or \emph{unicycle} is a spanning subgraph which is connected and has a unique cycle,
that is, is the union of a spanning tree and an additional edge. 
We let $\lambda (\G)$ be the weighted sum of the
collection of CRSTs.

Note that on a planar graph the dual of a $2$SF (take duals of all edges not in the $2$SF) is a CRST. 
It is natural to assign conductances to the planar dual $\G^*$ which are the reciprocals of the conductances of $\G$;
then duality gives a bijection from $2$SFs to CRSTs which multiplies the weight by a constant (the reciprocal of the
product of all conductances of $\G$).
}

For general graphs a useful identity is 
\be\label{Guuinterp} G_{u,u}=\frac{\kappa(\G_{u\sim b})}{\kappa(\G)}\ee
where $\G_{u\sim b}$ is the graph $\G$ with $u$ and $b$ identified. This can be proved
by comparing the Laplacian of $\G$ and $\G_{u\sim b}$ which differ
in only a single entry; see e.g. \cite{Kirch}.

\subsection{Vertex-inclusion probabilities}
For any $2$SF of $\G$, we define the \emph{floating component} to
be the component not containing $b$.

In the following, let $\P$ denote the probability measure on $2$SFs which assigns to each $2$SF a probability proportional to its weight.
Let $\kappa_2=\kappa _{2}(\G)$ be the weighted sum of $2$SFs.

\begin{theorem}\label{3}
Let $\Sigma$ be the floating component of a $\P$-random $2$SF on $\G$. 
The probability that vertex $u$ is in $\Sigma$ is
\be\P(u) = \frac{\kappa}{\kappa _{2}}G_{u,u}.\label{1m}\ee
The probability that two vertices $u$ and $v$ are in $\Sigma $ is 
\be
\P(u,v)=\frac{\kappa}{\kappa _{2}}G_{u,v}.
\label{2m}
\end{equation}
\end{theorem}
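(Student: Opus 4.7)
My plan is to prove~(\ref{1m}) first by a direct merging bijection, and then to derive~(\ref{2m}) by conditioning on $\{u \in \Sigma\}$ and appealing to the random-walk interpretation of the Green's function.

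For (\ref{1m}): a $2$SF of $\G$ with $u \in \Sigma$ has one component containing $b$ and another containing $u$. Identifying $u$ with $b$ merges these two components and yields a connected subgraph of $\G_{u\sim b}$ that spans all $n-1$ vertices and uses $n-2=(n-1)-1$ edges, hence a spanning tree of $\G_{u\sim b}$. The inverse map splits the merged vertex to recover the $2$SF, giving a weight-preserving bijection. Therefore $\P(u)=\kappa(\G_{u\sim b})/\kappa_2(\G)$, and (\ref{Guuinterp}) converts this into (\ref{1m}).

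For (\ref{2m}), I would condition on $\{u \in \Sigma\}$; by the bijection above, the conditional law of the $2$SF is the conductance-weighted spanning-tree measure on $\G_{u\sim b}$. In such a tree $T$, the unique edge incident to the merged vertex $[u\sim b]$ that lies on the $v$-to-$[u\sim b]$ path was, in $\G$, incident to either $u$ or $b$, and $v$ ends up in $\Sigma$ precisely in the former case. Generating $T$ via Wilson's algorithm rooted at $[u\sim b]$ with $v$ processed first, this terminal edge is the last step of the loop-erased random walk from $v$ to $[u\sim b]$, which coincides with the last step of the unerased walk (since loop-erasure preserves the final step into a vertex that is visited only once). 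The unerased walk in $\G_{u\sim b}$ is literally the random walk in $\G$ from $v$ stopped on first hitting $\{u,b\}$, so the conditional probability that $v\in \Sigma$ given $u \in \Sigma$ equals the probability that this walk hits $u$ before $b$.

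Standard potential theory identifies this hitting probability as the unique function on $V$ that is harmonic off $\{u,b\}$, equals $1$ at $u$, and equals $0$ at $b$. The map $w\mapsto G_{u,w}/G_{u,u}$ meets all three conditions (harmonicity off $u$ follows from $\Delta_D G_{u,\cdot}=e_u$, the value at $u$ is $1$ by construction, and the value at $b$ is $0$ by the Dirichlet condition), and so must equal the hitting probability. Multiplying by $\P(u\in\Sigma)=\kappa G_{u,u}/\kappa_2$ yields (\ref{2m}). The main technical obstacle is the merge/un-merge bookkeeping together with verifying that Wilson's algorithm delivers the terminal edge with the stated random-walk law; the degenerate cases $v=u$ and $v=b$ are consistent with the formula via $G_{u,u}/G_{u,u}=1$ and the Dirichlet condition $G_{u,b}=0$.
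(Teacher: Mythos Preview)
Your proof is correct and follows essentially the same approach as the paper's. For~(\ref{1m}) the paper phrases the bijection slightly differently---adding an auxiliary edge $e_u$ from $b$ to $u$ and counting spanning trees of the augmented graph containing $e_u$ via the transfer current, which is equivalent to your direct identification of $u$ with $b$---and for~(\ref{2m}) the paper uses the same conditioning-plus-Wilson's-algorithm argument you give, stated more tersely as ``the conditional probability is the harmonic function with boundary values $1$ at $u$ and $0$ at $b$.''
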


\begin{proof}
Let $\G_{u}$ be the graph $\G$ with an additional edge $e_{u}$
connecting the wired boundary $b$ to $u$. The event $\{u\in \Sigma \}$ has
an interpretation in terms of spanning trees of~$\G_{u}$: it is the
event that $e_{u}$ is contained in a spanning tree of $\G_{u}$. Thus, letting $T_{\G_u}$ denote the transfer current on~$\G_u$, we have
\begin{equation}
\P (u)=\frac1{\kappa_2(\G)}\sum_{\stackrel{\mbox{\tiny\text{spanning trees $T$}}}{\mbox{\tiny\text{containing $e_{u}$}}}}w(T)=\frac{\kappa (\G_u)}{\kappa _{2}(\G)}T_{\G_u}(e_u,e_u)=
\frac{\kappa (\G)}{\kappa _{2}(\G)}G_{u,u}\,,
\end{equation}
where the second equality follows from~\eqref{treeedgeprob} and the third one from 
(\ref{Guuinterp}).

We now condition on the event that $u\in \Sigma $. Wire $u$ and $b$ together 
and construct a spanning tree of $\G_{u\sim b}$ using Wilson's algorithm (\cite{Wil}) starting at $v$. 
The conditional probability $\P(v\in\Sigma~|~u\in\Sigma)$, as a function of $v$, is the harmonic function with 
boundary values $1$ at $u$ and $0$ at $b$, hence we have 
\begin{equation}
\P(v\in\Sigma~|~u\in\Sigma)=\frac{G_{u,v}}{G_{u,u}}.  \label{condm}
\end{equation}
The result follows.
\end{proof}

\begin{theorem}\label{bdyedgethm} The probability that edge $e$ connects $\Sigma$ to $\Sigma^c$ is
\be\label{bdyedge}\P(e\in\partial\Sigma) = \frac{\kappa T(e,e)}{c(e)\kappa_2}.\ee
\end{theorem}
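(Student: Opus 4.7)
The plan is to set up a weight-preserving bijection between two combinatorial objects: $2$SFs in which $e$ is a boundary edge of $\Sigma$, and spanning trees of $\G$ containing $e$. This reduces the statement to the transfer current formula \eqref{treeedgeprob} for edge-inclusion probabilities in a random spanning tree.

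First I would unpack the event $\{e\in\partial\Sigma\}$. Writing $e=uv$, this event says that one endpoint lies in $\Sigma$ and the other lies in $\Sigma^c$, so that $e$ bridges the two components of the $2$SF. For such a configuration $F$, the graph $F\cup\{e\}$ is a spanning tree of $\G$, because adding an edge between the two components of a forest on $V$ produces a connected acyclic spanning subgraph. Conversely, given any spanning tree $T$ of $\G$ containing $e$, removing $e$ disconnects $T$ into exactly two components (a tree minus an edge), yielding a $2$SF in which $e$ separates the two pieces. This defines a bijection between the two sets, and the weight satisfies $w(F\cup\{e\})=c(e)\, w(F)$.

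Using this bijection I would then rewrite
\begin{equation*}
\P(e\in\partial\Sigma) \;=\; \frac{1}{\kappa_2}\sum_{F:\, e\in\partial\Sigma} w(F) \;=\; \frac{1}{c(e)\kappa_2}\sum_{\stackrel{\text{spanning trees }T}{T\ni e}} w(T).
\end{equation*}
The inner sum equals $\kappa\cdot\Pr(e\in T)$, where $T$ is a weighted random spanning tree of $\G$; by \eqref{treeedgeprob} this is $\kappa\cdot T(e,e)$, yielding \eqref{bdyedge}.

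I do not foresee a serious obstacle; the only point requiring a moment of care is checking that the bijection respects the designation of the floating component (namely that whichever side of $e$ contains $b$ automatically becomes $\Sigma^c$), so that the event $\{e\in\partial\Sigma\}$ coincides cleanly with $\{e \text{ separates the two components of }F\}$. Once that is noted, the proof is a one-line computation using \eqref{treeedgeprob}, so the heart of the argument is genuinely the add/remove-$e$ bijection.
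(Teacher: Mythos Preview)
Your argument is correct and is exactly the approach of the paper, which simply notes that $\kappa\,T(e,e)$ is the weighted sum of spanning trees containing $e$ and implicitly uses the same add/remove-$e$ bijection (with the factor $c(e)$) to identify these with $2$SFs for which $e\in\partial\Sigma$. You have merely spelled out the bijection and the weight bookkeeping that the paper leaves to the reader.
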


\begin{proof} By \cite{BP}, see \eqref{treeedgeprob}, $\kappa T(e,e)$ is the weighted sum of spanning trees containing edge $e$.
\end{proof}

\old{
By duality, for a CRST on the dual of a planar graph $\G$, the probability that edge~$e^*$, dual of edge~$e$, 
is in the unique cycle is
$$\P(e^*~\text{in cycle}) = \frac{\kappa(\G)}{c(e)\kappa_2(\G)}T_{\G}(e,e)=
\frac{c(e^*)\kappa(\G^*)}{\lambda(\G^*)}\left(1-T_{\G^*}(e^*,e^*)\right).$$}

\subsection{First and second moments of the size of $\Sigma$}

Let $\partial \Sigma $ denote the boundary of $\Sigma $, that is, the set of
edges with exactly one endpoint in $\Sigma $. Let $|\partial\Sigma|$ denote the sum of
weights of edges in $\partial\Sigma.$

\begin{lemma}
\be\label{lstar}
\mathbb{E}(|\partial\Sigma|) =
\frac{\sum_{e\in E}\kappa (\G)\P (e\in T)}{\kappa _{2}(\G)}  =\frac{\kappa (\G)(|V|-1)}{\kappa _{2}(\G)}.
\ee
\end{lemma}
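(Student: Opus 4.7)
The plan is to compute $\E(|\partial\Sigma|)$ by linearity of expectation, applying Theorem~\ref{bdyedgethm} edge-by-edge and then recognizing the resulting sum as the weighted expected number of edges in a random spanning tree.

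First I would write
$$|\partial\Sigma| = \sum_{e\in E} c(e)\, \mathbf{1}_{\{e \in \partial\Sigma\}},$$
so that by linearity
$$\E(|\partial\Sigma|) = \sum_{e\in E} c(e)\, \P(e \in \partial\Sigma).$$
Substituting $\P(e\in\partial\Sigma) = \kappa\, T(e,e)/(c(e)\kappa_2)$ from Theorem~\ref{bdyedgethm} makes the $c(e)$ factors cancel, leaving
$$\E(|\partial\Sigma|) = \frac{\kappa}{\kappa_2}\sum_{e\in E} T(e,e) = \frac{1}{\kappa_2}\sum_{e\in E}\kappa\,\P(e\in T),$$
where the last step uses \eqref{treeedgeprob} to identify $T(e,e)$ with the inclusion probability of $e$ in a weight-proportional random spanning tree. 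This is exactly the first claimed equality.

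For the second equality I would swap the order of summation, using that $\kappa\,\P(e\in T)$ is the weighted sum of spanning trees containing $e$:
$$\sum_{e\in E}\kappa\,\P(e\in T) = \sum_{\text{spanning trees } T} w(T) \sum_{e\in E}\mathbf{1}_{\{e\in T\}} = \sum_{\text{spanning trees } T} w(T)\,(|V|-1) = \kappa\,(|V|-1),$$
invoking only the fact that every spanning tree of $\G$ has exactly $|V|-1$ edges. Dividing by $\kappa_2$ yields the desired formula.

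There is no real obstacle here: the lemma is essentially a bookkeeping consequence of Theorem~\ref{bdyedgethm} together with the matrix-tree identity $\sum_e \P(e\in T)=|V|-1$. The only points deserving care are the distinction between the weighted quantity $|\partial\Sigma|=\sum_{e\in\partial\Sigma}c(e)$ and an unweighted edge count, and confirming that the conductance $c(e)$ appearing in the denominator of Theorem~\ref{bdyedgethm} cancels cleanly with the weight of $e$ in the expectation.
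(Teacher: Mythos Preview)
Your proof is correct and follows essentially the same approach as the paper: multiply the identity of Theorem~\ref{bdyedgethm} by $c(e)$, sum over edges, and then use that every spanning tree has $|V|-1$ edges. Your write-up simply makes explicit the linearity-of-expectation step and the swap of summation that the paper leaves implicit.
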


\begin{proof}
The first equality follows from (\ref{bdyedge}), upon multiplying both sides of (\ref{bdyedge}) by $c(e)$ and
then summing over all edges.
The second equality follows from the fact that 
every spanning tree has exactly $|V|-1$ edges.
\end{proof}

Let $\ell^*=\E(|\partial\Sigma|)$ be the quantity in (\ref{lstar}). Summing
(\ref{1m}) and (\ref{2m}) over all vertices, we obtain the
following volume moments.

\begin{theorem}
\label{main} We have 
\begin{equation*}
\mathbb{E}(|\Sigma |)={\ell ^{\ast }}\frac{\left\vert V\right\vert }{
\left\vert V\right\vert -1}R\,\quad \text{and}\quad \,\mathbb{E}(|\Sigma
|^{2})={\ell ^{\ast }\frac{|V|}{\left\vert V\right\vert -1}\mathbb{E}
(\tau _{b}),}
\end{equation*}
where ${R=\sum_{v\in V}G_{v,v}/|V|}$ is the mean resistance between $v$ and $b$
for a uniform random $v$, and 
$$\mathbb{E}(\tau _{b})=\frac1{|V|}\sum_{u,v\in
V}G_{u,v}$$ is the expected hitting time to $b$ for the conductance-biased
random walk started at a uniform starting vertex.
\end{theorem}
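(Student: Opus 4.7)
The plan is to obtain both moments by writing $|\Sigma|$ and $|\Sigma|^2$ as sums of indicator random variables over vertices (and pairs of vertices), taking expectations, and then applying the vertex-inclusion formulas (\ref{1m}) and (\ref{2m}) from Theorem~\ref{3}. The lemma above already expresses $\kappa/\kappa_2$ in terms of $\ell^*$, which we will use at the end to convert the prefactor into the stated form. I will extend $G$ to all of $V$ by setting $G_{b,v}=G_{v,b}=0$ so that sums over $V$ in the definitions of $R$ and $\mathbb{E}(\tau_b)$ are unambiguous (note $b\notin\Sigma$ by definition, so the extension is consistent with Theorem~\ref{3}).

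First I would write
\begin{equation*}
|\Sigma| = \sum_{u \in V} \mathbf{1}_{u \in \Sigma},
\end{equation*}
take expectations using linearity, and apply (\ref{1m}) to obtain
\begin{equation*}
\mathbb{E}(|\Sigma|) = \sum_{u \in V} \P(u \in \Sigma) = \frac{\kappa}{\kappa_2}\sum_{u \in V} G_{u,u} = \frac{\kappa}{\kappa_2}\,|V|\,R.
\end{equation*}
The lemma gives $\kappa/\kappa_2 = \ell^*/(|V|-1)$, and substituting this yields the stated expression for $\mathbb{E}(|\Sigma|)$.

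For the second moment I would use
\begin{equation*}
|\Sigma|^2 = \sum_{u,v \in V} \mathbf{1}_{u \in \Sigma}\mathbf{1}_{v \in \Sigma},
\end{equation*}
and observe that both the off-diagonal terms (via (\ref{2m})) and the diagonal terms $u=v$ (via (\ref{1m}), since $\mathbf{1}_{u\in\Sigma}^2 = \mathbf{1}_{u\in\Sigma}$) are uniformly given by $(\kappa/\kappa_2)G_{u,v}$. Hence
\begin{equation*}
\mathbb{E}(|\Sigma|^2) = \frac{\kappa}{\kappa_2}\sum_{u,v \in V} G_{u,v} = \frac{\kappa}{\kappa_2}\,|V|\,\mathbb{E}(\tau_b),
\end{equation*}
and substituting $\kappa/\kappa_2 = \ell^*/(|V|-1)$ gives the second formula.

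There is no real obstacle: everything reduces to the two identities of Theorem~\ref{3} and the lemma. The only bookkeeping point worth noting is the uniform treatment of the diagonal $u=v$ in the second-moment sum, which works precisely because (\ref{2m}) and (\ref{1m}) share the same constant $\kappa/\kappa_2$ and because extending $G$ to $b$ by zero is consistent with the impossibility of $b \in \Sigma$.
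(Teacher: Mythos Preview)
Your proof is correct and is exactly the approach the paper indicates: sum (\ref{1m}) over $u\in V$ for the first moment and (\ref{2m}) over $(u,v)\in V^2$ for the second, then use (\ref{lstar}) to rewrite $\kappa/\kappa_2$ as $\ell^*/(|V|-1)$. Your explicit handling of the diagonal terms and the extension $G_{b,\cdot}=0$ are the only details the paper leaves implicit.
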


\subsection{Pinned bush}\label{pinbush}

For any vertex $z_0\ne b$, let $\P_{z_0}$ be the probability distribution of the random $2$SF
conditioned so that its floating component $\Sigma$ contains~$z_{0}$. 

Using the expression of $\mathbb{P}\left( u\in \Sigma |z_{0}\in\Sigma\right) $ in~\eqref{condm},
and summing over $u$, we obtain that the conditional expected size of $\Sigma$ satisfies 
$$
\E\left(|\Sigma|~:~z_0\in\Sigma\right)=\sum_{v}\frac{G_{v,z_0}}{G_{z_0,z_0}}={\frac{\E(\tau^{z_0}_D)}{G_{z_0,z_0}}},$$
where 
$\tau^{z_0}_D$ is the exit time of a conductance-biased random walk started at $z_0$.

\section{Infinite graphs}

Let $\G=\Z^d$ with constant conductances $1$.
Let $\G_n=\G\cap[-n,n]^d$.
We wire all vertices of $\G\setminus\G_n$ into a single vertex which plays the role of the 
boundary of $\G_n$. 

In this setting the potential kernel on $\G_n$ converges to the potential kernel on $\G$.
On $\G_n$ we have $A_{u,v}=1/2d+o(1)$ for any edge $uv$ not within $O(1)$ of the boundary.
Since there are $dn^d(1+o(1))$ edges in $\G_n$, formula (\ref{lstar}) gives  
$$\kappa_2(\G_n) = \kappa(\G_n)n^d/(4d)(1+o(1)).$$

By (\ref{ratio}), the expected boundary size of the floating component is then
\be\label{4d}\E(|\partial\Sigma|)=4d+o(1).\ee

More generally, let $\G$ be a graph in $\R^d$, periodic under translations in $\Z^d$. 
Again let $\G_n=\G\cap[-n,n]^d$.
In this setting again the potential kernel on $\G_n$ converges to the potential kernel on $\G$.

Combining (\ref{ratio}) and (\ref{lstar}), we find (to leading order)
$$\E(|\partial\Sigma|) = n_0\left[\sum_{e\in \text{f.d.}} A_{u,v}A_{v,u}+\left(A_{u,v}-A_{v,u}\right)^2\right]^{-1},$$
where the sum is over edges in a single fundamental domain $[0,1)^d$, and $n_0$ is the number of vertices
per fundamental domain.

\section{Euclidean domains}

Here we consider scaling limits.
Although our results apply in greater generality (see the last but one paragraph of this section), for simplicity we consider only the case of subgraphs of $\Z^d$.

Let $D$ be a domain in $\mathbb{R}^{d}$ (for some $d\geq 2$) with boundary a smooth hypersurface.
Let $\G_{n}$ be the nearest-neighbor graph of $\frac1n\Z^d$ with all vertices outside of $D$ wired to a single vertex called $b$.  All edges have conductance $1$.

By (\ref{4d}) above, $\E_n(|\partial\Sigma|)\to 4d$ as $n\to\infty$.
For general graphs $\G$ we denote this limit $\ell^*=\ell^*(\G)$ when it exists.

Denote $R_{n}$ to be the mean resistance (from a uniformly chosen vertex to the wired
boundary) associated with $\G_{n}$. We obtain the following.

\begin{theorem}
\label{maininfinite} Suppose $d\geq 2$. Let $z\ne z'\in D$, and $z_{n},z_{n}'$ be
points on $\G_n$ within distance $O\left( 1/n\right) $ of $
z,z'$, respectively. As $n\rightarrow \infty $, we have 
\begin{equation*}
\P_n (z_{n},z_{n}'\in\Sigma)=\frac{4d}{|D|n^{2d-2}}g_D^0(z,z')\,+o( n^{-d}),
\end{equation*}

As $n\rightarrow \infty $, we have (for $d\geq 3$)
\begin{eqnarray*}
\E_n(|\Sigma|)&=&4dR^{\ast }+o(1)\\
\E_n(|\Sigma|^{2})&=&4dC(D)|D|n^2+o(n^2),
\end{eqnarray*}
where $R^{\ast }=\lim_{n\rightarrow \infty }R_{n}$, and $C(D)=\lim_{n\to\infty}\frac{\E(\tau_b)}{|D|n^2}$ is the
expected exit time from $D$ for Brownian motion started at a uniform point in $D$, divided by $|D|$\old{(to make it scale invariant)}.

For $d=2$, the expression for $\mathbb{E}_n(|\Sigma|^{2})$ is
the same above, and we have \old{
$$
\P (z_{n},z_{n}')=\frac{4}{\pi n^2}\log
|z-z'|\quad +o( n^{-2})$$
}
$$
\E_n(|
\Sigma|)=\frac{4\log n}{\pi}+o(\log n).
$$
\end{theorem}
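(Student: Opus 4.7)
The approach is to bootstrap from the finite-graph identities of Theorem~\ref{3} and Theorem~\ref{main}, combined with the scaling limit of the discrete Green's function $G_n$ on $\G_n$ to the continuum Dirichlet Green's function $g_D^0$ on $D$. From Section~3 one already has $\ell^*(\G_n)\to 4d$, and since $|V_n|=|D|n^d(1+o(1))$, Lemma~\ref{lstar} gives
\[
\frac{\kappa(\G_n)}{\kappa_2(\G_n)}=\frac{\ell^*(\G_n)}{|V_n|-1}=\frac{4d}{|D|n^d}(1+o(1)).
\]

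For the two-point probability, identity~\eqref{2m} applied on $\G_n$ reduces the claim to the lattice-to-continuum asymptotic $G_n(z_n,z_n')=n^{2-d}g_D^0(z,z')(1+o(1))$, a standard consequence of the invariance principle for the $\G_n$-walk (with the $d=2$ case handled up to a logarithm at the diagonal). Multiplying the two displayed asymptotics gives the stated formula.

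For $\E_n(|\Sigma|)$ in $d\geq 3$, Theorem~\ref{main} reads $\E_n(|\Sigma|)=\ell^*(\G_n)\cdot\frac{|V_n|}{|V_n|-1}\cdot R_n$, and the limit follows directly once one knows $R_n\to R^*$, which is finite in $d\geq 3$ because $G_n(v,v)$ is uniformly bounded for $v$ at macroscopic distance from $b$. For $\E_n(|\Sigma|^2)$ in any $d\geq 2$, Theorem~\ref{main} gives $\E_n(|\Sigma|^2)=(4d+o(1))\,\E_n(\tau_b)$, and $\E_n(\tau_b)/(|D|n^2)$ is a Riemann-sum approximation of $C(D)$: diffusively rescaling time by $n^2$ sends the conductance-$1$ walk to Brownian motion in $D$ absorbed at $\partial D$, so that $\E_n(\tau_b)=C(D)|D|n^2(1+o(1))$. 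The remaining case $d=2$ of $\E_n(|\Sigma|)$ uses the classical fact that the diagonal $G_n(v,v)$ for typical interior $v$ grows like $\frac{1}{2\pi}\log n+O(1)$ on a $\Z^2$ domain of side $n$; averaging and multiplying by $\ell^*(\G_n)\to 8$ yields $\E_n(|\Sigma|)=\frac{4\log n}{\pi}(1+o(1))$.

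The principal technical obstacle is quantitative control of the discrete-to-continuum convergence of $G_n$: pointwise off-diagonal estimates with uniform error on compact subsets of $D\setminus\{z=z'\}$ for the two-point probability, and a Riemann-sum estimate of the form $\sum_{u,v}G_n(u,v)=n^{d+2}\iint_{D\times D}g_D^0+o(n^{d+2})$ in which the near-diagonal contribution must be absorbed (harmless for $d\geq 3$ where $g_D^0$ is integrable, but requiring more care when $d=2$). In the two-dimensional case one must further control the logarithmic diagonal divergence uniformly over interior $v$, so that vertices within $o(n)$ of $\partial D$ do not spoil the universal leading constant $\frac{1}{2\pi}$.
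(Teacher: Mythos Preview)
Your proposal is correct and follows essentially the same route as the paper: you reduce everything to Theorems~\ref{3} and~\ref{main} together with the asymptotic $\kappa/\kappa_2=4d/(|D|n^d)(1+o(1))$, then invoke the convergence of the discrete Green's function to $g_D^0$, of the rescaled exit time to the Brownian exit time, and (in $d=2$) the asymptotic $R_n\sim\frac{1}{2\pi}\log n$. The paper's own proof is terser---it cites Rayleigh's principle for $R_n\to R^*$ rather than your boundedness-of-$G_n(v,v)$ argument, and does not spell out the near-diagonal and boundary issues you flag in your final paragraph---but the structure is identical.
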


\begin{proof}
The sequence of graphs $( \G_{n}) _{n\geq 1}$
has the following approximation property: the discrete Green's function
converges under rescaling by $n^{2-d}$ to the continuous Green's function $g_{D}^{0}$ on $D$ with
Dirichlet boundary condition \cite{LL}: $n^{d-2}G(z_n,z_n')\to g^0_D(z,z')$.

The result follows by passing to the limit in Theorem~\ref{main} and using convergence of random walk to Brownian motion to get the convergence of the expected exit time (with the right Brownian time-space scaling). The convergence of the mean resistance to a limit on the infinite network follows from Rayleigh's principle~\cite{BLPS}.

In particular, $R^{\ast }<\infty $ exists for $d\geq 3$ by transience of the
random walk \cite{DS}. For $d=2$, $\lim_{n\rightarrow \infty }R_{n}/\log
n={1}/({2\pi})$ by explicit asymptotics of the Green's kernel~\cite{LL}.

\old{More simply, the result may also be obtained by applying~\eqref{1m}, \eqref{2m}, Theorem~\ref{main}, and using the convergence of Riemann sums for the discrete Green's function to integrals of the continuous Green's function. }
\end{proof}

\old{As already pointed out, for $\Z^d$, $\ell^*=4d$. }
The mean resistance is the normalized trace of the Green's function, and so for the cubic grid of sidelength $n$
can be computed by explicit diagonalization. We have
\be
R_n=n^{-d}\sum_{k_1,\ldots, k_d\stackrel{*}{=}1}^n\left(4\sum_{i=1}^d\sin^2(\pi k_i/n)\right)^{-1},
\ee
where the $\stackrel{*}{=}$ indicates that we leave off the term in which all $k_j=n$,
and
$$R^*=\frac1{(2\pi)^d}\int_{[0,2\pi]^d}\frac1{2d-2\cos\theta_1-\dots-2\cos\theta_d} d\theta_1\dots d\theta_d.$$

For fixed $|D|$, the constant $C(D)$ is maximized for a sphere~\cite{Po}. In the case of a
cuboid $D$ in $\mathbb{Z}^{d}$ of side lengths $a_{1},\ldots ,a_{d}$, it is equal 
(the proof uses the explicit expression for the eigenvalues of the Laplacian) to

\begin{equation}
C(D)=\frac{4^{2d}}{\pi^{2d+2}}\sum_{n_{i}\ge 1,{\text{odd }}}
\prod_{i=1}^{d}\frac{1}{a_in_i^2}\left(\sum_{i=1}^d\frac{n_{i}^{2}}{a_i^2}\right)^{-1}
\,.  \label{drect}
\end{equation}

It is interesting to note that in dimension two, $C(D)= P(D)/|D|^2$, where $P(D)$ is what P\'olya calls the \emph{torsional rigidity} of the cross-section $D$, and which is, in mechanical terms, a measure of the resistance to torsion of a cylindrical beam with cross-section $D$, defined by $1/P(D)=\inf_{f}w(f)$ is the infimum, over all smooth functions $f$ over $D$ vanishing on the boundary, of $w(f):=\frac{\int_{D}\left|\nabla f\right|^2}{4\left(\int_{D}f\right)^2}$.

Theorem~\ref{maininfinite} is valid in greater generality: one can take any periodic graph in $\R^d$ for which random walk converges to Brownian motion, and replace the constants~$4d$ in the statement with $\ell^\ast$. Indeed, we need only the property that the discrete Green's function converges to
the continuous one. 
Both $\ell^*$ and $R^*$ can be computed, since the Green's function is an explicit integral of a rational function
and $\ell^*$ and $R^*$ are obtained from the Green's function.
For the square, triangular, and hexagonal lattices $\ell^*$ is easily computed;
we list the relevant quantities for these cases in Table~\ref{constants}.

\begin{table}[tb]
\centering
\begin{tabular}{|l|l|l|l|}
\hline
& square grid & hexagonal grid & triangular grid \\ 
\hline
\hline
$\ell^*$ & $8$ & $6$ & $12$ \\ 
\hline
$\mathbb{E}_n(A)/(\log n/\pi)$ & $4$ & $3\sqrt{3}$ & $2 \sqrt{3}$ \\ 
\hline
$\mathbb{E}_n(A^2)/ (C(D) n^2)$ & $4$ & $3\sqrt{3}$ & $2 \sqrt{3}$\\
\hline
\end{tabular}%
\caption{Constants for three planar regular lattices}\label{constants}
\label{table}
\end{table}

\section{Spanning unicycles on planar graphs}

A \emph{cycle-rooted spanning tree} (CRST), or \emph{unicycle}, is a spanning subgraph which is connected and has a unique cycle,
that is, is the union of a spanning tree and an additional edge. 
We let $\lambda (\G)$ be the weighted sum of the
collection of CRSTs.

On a planar graph the dual of a $2$SF (take duals of all edges not in the $2$SF) is a CRST. 
It is natural to assign conductances to the planar dual $\G^*$ which are the reciprocals of the conductances of $\G$;
then duality gives a bijection from $2$SFs to CRSTs which multiplies the weight by a constant (the reciprocal of the
product of all conductances of $\G$).

In the planar case, we can use planar duality to translate the previous
statements about the floating component of a $2$SF into statements about
the unique loop of a weighted spanning unicycle.

On a planar graph embedded in the plane, the dual of a spanning unicycle on $%
\G$ is a $2$SF on the dual graph $\G^*$, for which we choose
the marked vertex $b^*$ to be the outer boundary face of $\G$. Let $V^*$
be the vertex set of $\G^*$.

Theorem \ref{bdyedgethm} in this setting shows
that for a CRST on the dual of a planar graph $\G$, the probability that edge~$e^*$, dual of edge~$e$, 
is in the unique cycle is
$$\P(e^*~\text{in cycle}) = \frac{\kappa(\G)}{c(e)\kappa_2(\G)}T_{\G}(e,e)=
\frac{c(e^*)\kappa(\G^*)}{\lambda(\G^*)}\left(1-T_{\G^*}(e^*,e^*)\right).$$

Theorem \ref{3} and Theorem \ref{main} translate to the following
result in this setting. Denote~$A$ to be the area (i.e. the number of
faces enclosed) of the unique cycle of a random spanning unicycle.

\begin{theorem}
Let $f,f'$ be two faces of $\G$. Then 
\begin{equation*}
\P(f \text{ enclosed}) = \frac{\kappa(\G)}{\lambda(\G)}G^*_{f,f}\qquad \P (f,f'\text{ enclosed})=
\frac{\kappa (\G)}{\lambda (\G)}
G_{f,f^{\prime }}^{\ast }.
\end{equation*}
We also have 
\begin{equation*}
\mathbb{E}(A)=\frac{\kappa (\G)}{\lambda (\G)}%
\sum_{v^{\ast }\in V^{\ast }}G_{v^{\ast },v^{\ast }}^{\ast }\,\quad \text{and%
}\quad \,\mathbb{E}(A^{2})=\frac{\kappa (\G)}{\lambda (\G)}%
|V^{\ast }|^{2}\mathbb{E}(\tau _{b^{\ast }})\,,
\end{equation*}%
where $\mathbb{E}(\tau _{b^{\ast }})$ is the expected hitting time of $b^{\ast }$ for a conductance-biased random walk started at a uniformly chosen
starting vertex of $\G^{\ast }$.
\end{theorem}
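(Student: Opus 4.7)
The plan is to transfer Theorems~\ref{3} and~\ref{main} to the dual graph $\G^{*}$ marked at the outer face $b^{*}$, via the planar duality between CRSTs on $\G$ and $2$SFs on $\G^{*}$ recalled at the start of this section.

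The first step is to pin down the weight bookkeeping. With dual conductances $c^{*}(e^{*})=1/c(e)$, edge complementation gives, up to the common factor $C:=\prod_{e\in E}c(e)$, a weight-preserving bijection both between spanning trees of $\G$ and spanning trees of $\G^{*}$, and between CRSTs of $\G$ and $2$SFs of $\G^{*}$. Hence $\kappa(\G)=C\,\kappa(\G^{*})$ and $\lambda(\G)=C\,\kappa_{2}(\G^{*})$, so
\[
\frac{\kappa(\G)}{\lambda(\G)}\;=\;\frac{\kappa(\G^{*})}{\kappa_{2}(\G^{*})}.
\]

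The second step is to identify the random variables. Drawing $\G$ on the sphere, the unique cycle of a CRST on $\G$ separates the faces of $\G$ into the outer face $b^{*}$ and the enclosed faces; these are exactly the two components of the dual $2$SF on $\G^{*}$, since the dual edges deleted are precisely those crossing the CRST. Letting $\Sigma$ be the component of the dual $2$SF not containing $b^{*}$, this yields the pathwise identity $A=|\Sigma|$ and $\{f\text{ enclosed}\}=\{f\in\Sigma\}$ for each face $f$.

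With these two inputs in place, the one- and two-face formulas follow by applying Theorem~\ref{3} on $(\G^{*},b^{*})$ and substituting the ratio above. The moment formulas come from summing the inclusion probabilities over $V^{*}$ and $V^{*}\times V^{*}$, yielding $\sum_{v^{*}}G^{*}_{v^{*},v^{*}}$ in the first case and $|V^{*}|\,\mathbb{E}(\tau_{b^{*}})$ in the second (up to the same normalisation convention used for $\mathbb{E}(\tau_{b})$ in Theorem~\ref{main}); this is precisely the passage from \eqref{1m}--\eqref{2m} to Theorem~\ref{main}, now carried out on $\G^{*}$. No real obstacle is expected, the argument being a mechanical translation; the only points that need care are the cancellation of $C$ in the first step and, in the geometric picture, checking that $b^{*}$ sits outside the cycle so that $\Sigma$ coincides with the enclosed faces rather than their complement.
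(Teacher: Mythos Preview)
Your proposal is correct and is exactly the approach the paper intends: the paper itself offers no standalone proof, stating only that ``Theorem~\ref{3} and Theorem~\ref{main} translate to the following result in this setting,'' and you have carried out that translation with appropriate care over the weight factor $C$ and the identification $A=|\Sigma|$.

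One remark: your computation for the second moment gives $\dfrac{\kappa(\G)}{\lambda(\G)}\,|V^{*}|\,\mathbb{E}(\tau_{b^{*}})$, not $|V^{*}|^{2}$ as printed in the statement. Your answer is the correct one---substituting $\ell^{*}=\dfrac{\kappa(\G^{*})(|V^{*}|-1)}{\kappa_{2}(\G^{*})}$ into Theorem~\ref{main} yields a single factor of $|V^{*}|$---so the exponent $2$ in the displayed theorem appears to be a typographical slip in the paper rather than a gap in your argument.
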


Let $\G_{n}$ be the $n\times n$ grid with unit conductances. 
In \cite{KK} were computed up to constants the moments of the combinatorial area $A$ of
the uniform unicycle on $\G_n$ (whose probability distribution we denote by $\nu_n$), or equivalently, the moments of the size of the floating component of the uniform $2$SF:

\begin{theorem}\cite{KK}\label{highermoms}
\label{moments} For all integer $k\ge 2$, there is a constant $C_k>0$, such that $\E\left(A_{n}^{k}\right)=C_{k}n^{2k-2}(1+o(1))$, as $n\to\infty$.
\end{theorem}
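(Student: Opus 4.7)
The plan is to write $\E(A_n^k) = \sum_{u_1,\ldots,u_k \in V(\G_n)} \P(u_1,\ldots,u_k \in \Sigma)$, the $k$-fold generalization of the identity behind Theorem~\ref{3}, and sandwich this sum between matching bounds of order $n^{2k-2}$. The upper bound is immediate: the deterministic inequality $A_n \leq |V(\G_n)| = O(n^2)$ gives $A_n^k \leq n^{2(k-2)} A_n^2$, so by Theorem~\ref{maininfinite}, $\E(A_n^k) \leq n^{2(k-2)} \E(A_n^2) = O(n^{2k-2})$, with $\limsup_n \E(A_n^k)/n^{2k-2} \leq 4C(D)|D|$.

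For the lower bound and the precise constant $C_k$, the plan is to derive a closed-form formula for $\P(u_1,\ldots,u_k \in \Sigma)$ by combining Chaiken's generalized matrix-tree theorem with a M\"obius inversion over set partitions: this expresses $\kappa_2\,\P(u_1,\ldots,u_k \in \Sigma)$ as a signed combination of principal minors $\det \Delta[V \setminus S]$ of the Laplacian, which via Jacobi's complementary-minor identity becomes a signed combination of principal minors $\det G[\cdot]$ of the Green's function (one can verify on small examples that this signed combination is necessary, as no single submatrix determinant reproduces $\P(u_1,u_2,u_3 \in \Sigma)$ in a non-symmetric graph). In the scaling limit, the discrete Green's function converges to the continuum Dirichlet Green's function $g_D^0$, yielding $n^2\,\P(u_{1,n},\ldots,u_{k,n} \in \Sigma) \to F_k(z_1,\ldots,z_k)$ pointwise off the diagonal for a specific continuum functional $F_k$; a Riemann-sum / dominated-convergence argument then gives $\E(A_n^k)/n^{2k-2} \to C_k = \int_{D^k} F_k(z)\,dz > 0$.

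The main obstacle is control of the diagonal singularities. In two dimensions $g_D^0(z,z')$ diverges logarithmically as $z' \to z$, and the diagonal discrete Green's function entries $G_{u,u}$ grow like $(\log n)/(2\pi)$, so the minors $\det G[S]$ entering $F_k$ acquire $(\log n)$-powers on each partial diagonal $\{z_i = z_j\}$. To pass to the continuum limit one must verify that the signed Chaiken combination yields an $F_k$ that is integrable across each partial diagonal (with the logarithmic singularities cancelling in the signed sum) and that the discrete-to-continuum convergence error, worst near these diagonals, is overwhelmed when summed against the $\Theta(n^{2k})$ grid tuples. This near-diagonal analysis is the crux of the argument and presumably the content of the cited reference~\cite{KK}; the order-of-magnitude upper bound above sidesteps it entirely.
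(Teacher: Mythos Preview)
Your route is genuinely different from the paper's, and it contains a real gap.

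The paper never computes or even uses a formula for $\P(u_1,\ldots,u_k\in\Sigma)$ when $k\ge 3$. Instead it changes measure: with $\mu_n$ the unicycle law reweighted by $A^2$ (so $d\mu_n/d\nu_n=A^2/\E_{\nu_n}(A^2)$), one has $\E_{\nu_n}(A^k)=\E_{\nu_n}(A^2)\cdot\E_{\mu_n}(A^{k-2})$. The input from~\cite{KK} is that $\mu_n$, viewed on loops in $[0,1]^2$, converges weakly to a limit $\mu$ giving positive mass to loops of positive area; since the rescaled area $\theta=A/n^2$ is bounded in $[0,1]$, every moment $\E_\mu(\theta^{k-2})$ exists, is strictly positive, and equals $\lim_n\E_{\mu_n}((A/n^2)^{k-2})$. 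Combined with the $k=2$ case from Theorem~\ref{maininfinite} this yields $C_k=C(D)|D|\,\E_\mu(\theta^{k-2})$. No multi-point inclusion formula and no near-diagonal singularity analysis enters.

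Your plan, by contrast, hinges on a closed expression for $\P(u_1,\ldots,u_k\in\Sigma)$ as a signed combination of Green's-function minors. You do not derive such a formula, and the paper itself lists precisely this as an open question for $k=3$ (``Can this probability be written in terms of the Green's function?''). Chaiken's all-minors theorem enumerates rooted forests with one prescribed root \emph{per component}; it does not directly count two-component forests in which a prescribed set of $k$ vertices all lie in the non-$b$ component, and you have not shown that a M\"obius combination over set partitions produces that count. Consequently the ``crux'' you defer to~\cite{KK}---integrability of the continuum $k$-point kernel across partial diagonals---is not what~\cite{KK} proves; \cite{KK} establishes the weak convergence of the area-squared-biased measure, which is exactly the mechanism the paper uses to avoid the problem you identify. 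Your upper bound $\E(A_n^k)\le n^{2k-4}\E(A_n^2)=O(n^{2k-2})$ is correct, but by itself gives neither the existence of the limit nor the positive constant $C_k$.
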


We give a sketch of the proof for completeness.

\begin{proof}
Let $\H_n$ be the graph $\G_n$ scaled to fit in the square $D=[0,1]^{2}$. Let $\mu_n$ be 
the measure on unicycles on $\H_n$, weighted by the square of the area of the cycle.
In \cite{KK} it is shown that $\mu_n$ converges as $n\to\infty$ to a measure $\mu$ with the property that
the probability of a cycle of positive area is positive. 
For a cycle of area $A$ in~$\G_n$, the Radon-Nikodym derivative between $\mu_n$ and $\nu_n$ is 
$d\mu_n/d\nu_n = A^{2}/\E_{\nu_n}(A^2)$. 

We have
$$\frac{\E_{\nu_n}(A^k)}{\E_{\nu_n}(A^2)} = \E_{\mu_n}(A^{k-2}) = n^{2k-4}\E_{\mu}(\theta^{k-2})(1+o(1)),$$
where $\theta$ is the scaled Euclidean area (in $[0,1]$). By Theorem \ref{maininfinite}, 
$$\E_{\nu_n}(A^2)=C(D)|D|n^2(1+o(1))$$
thus we have
$$\E_{\nu_n}(A^k)=n^{2k-2}C(D)\E_{\mu}(\theta^{k-2})(1+o(1)).$$
\end{proof}

\section{Questions}

\begin{enumerate}\item Can one compute the constants $C_k$ in the higher moments of Theorem \ref{highermoms}?

\old{
\item Can one compute the expected length of the cycle of the spanning unicycle
(or the looping constant of the LERW) on other periodic planar lattices? Can one compute $\ell^*$ in higher dimension, for example in $\Z^3$?}

\item Can one compute the expected length of the cycle of the spanning unicycle in higher dimension, for example in $\Z^3$?

\item The probability that three distinct vertices are in $\Sigma$ seems to be a much harder quantity to compute.
Can this probability be written in terms of the Green's function?
\end{enumerate}

\section*{Acknowledgements}

We thank C\'edric Boutillier, Yuval Peres, and David Wilson
for helpful discussions and feedback, as well as the anonymous referee for spotting an incorrection in the first version.

\end{document}